\documentclass[12pt]{article}
\usepackage{amsfonts,amssymb,amsmath,amsthm}
\usepackage{graphics}
\DeclareGraphicsExtensions{.eps}
\date{}
\newtheorem{theorem}{Theorem}[section]
\newtheorem{proposition}{Proposition}[section]
\newtheorem{lemma}{Lemma}[section]

\theoremstyle{definition}
\newtheorem{definition}{Definition}[section]

\newtheorem{example}{Example}[section]
\numberwithin{equation}{section}
\numberwithin{theorem}{section}
\numberwithin{proposition}{section}
\numberwithin{lemma}{section}
\numberwithin{corollary}{section}
\numberwithin{definition}{section}
\usepackage{graphicx}
\newcommand{\R}{\mathbb{R}}
\newcommand{\N}{\mathbb{N}}
\newcommand{\Z}{\mathbb{Z}}
\newcommand{\Q}{\mathbb{Q}}
\newcommand{\T}{\mathbb{T}}
\newcommand{\D}{\mathcal{D}}
\newcommand{\esslim}{\operatornamewithlimits{ess\,lim}}
\newcommand{\sign}{\operatorname{sign}}
\newcommand{\meas}{\operatorname{meas}}
\newcommand{\Cl}{\operatorname{Cl}}
\newcommand{\Int}{\operatorname{Int}}
\newcommand{\pr}{\operatorname{pr}}
\newcommand{\supp}{\operatorname{supp}}
\newcommand{\const}{\mathrm{const}}
\renewcommand{\div}{\operatorname{div}}
\addtolength{\textwidth}{60pt}
\sloppy

\begin{document}

\title{On decay of entropy solutions to multidimensional conservation laws in the case of perturbed periodic initial data}
\author{Evgeny Yu. Panov}


\maketitle

\begin{abstract}
Under a precise genuine nonlinearity assumption we establish the decay of entropy solutions of a multidimensional scalar conservation law with merely continuous flux and with initial data being a sum of periodic function and a function vanishing at infinity (in the sense of measure).
\end{abstract}



\section{Introduction}
In the half-space $\Pi=\R_+\times\R^n$, $\R_+=(0,+\infty)$, we consider a first order multidimensional
conservation law
\begin{equation}\label{1}
u_t+\div_x\varphi(u)=0,
\end{equation}
with the flux vector $\varphi(u)$ supposed to be only continuous: $\varphi(u)=(\varphi_1(u),\ldots,\varphi_n(u))\in C(\R,\R^n)$. Equation (\ref{1}) is endowed with initial condition
\begin{equation}\label{2}
u(0,x)=u_0(x)\in L^\infty(\R^n).
\end{equation}
We recall the notion of entropy solution of the Cauchy problem (\ref{1}), (\ref{2}) in the sense of S.N.~Kruzhkov \cite{Kr}.

\begin{definition}\label{def1}
A bounded measurable function $u=u(t,x)\in L^\infty(\Pi)$ is called an entropy solution (e.s. for
short) of (\ref{1}), (\ref{2}) if for all $k\in\R$
\begin{equation}\label{entr}
|u-k|_t+\div_x[\sign(u-k)(\varphi(u)-\varphi(k))]\le 0
\end{equation}
in the sense of distributions on $\Pi$ (in $\D'(\Pi)$), and
\begin{equation}\label{ini}
\esslim_{t\to 0+} u(t,\cdot)=u_0 \mbox{ in } L^1_{loc}(\R^n).
\end{equation}
\end{definition}
Condition (\ref{entr}) means that for all test functions $f=f(t,x)\in C_0^1(\Pi)$, $f\ge 0$
$$
\int_\Pi [|u-k|f_t+\sign(u-k)(\varphi(u)-\varphi(k))\cdot\nabla_xf]dtdx\ge 0
$$
(where ``$\cdot$'' denotes the inner product in $\R^n$).
It is known that e.s. of (\ref{1}), (\ref{2}) always exists but in the case of only continuous flux may be nonunique, see \cite{KrPa1,KrPa2}. Nevertheless, if initial function is periodic (at least in $n-1$ independent directions), the uniqueness holds:
an e.s. of (\ref{1}), (\ref{2}) is unique and space-periodic, see \cite{PaMax1,PaMax2,PaIzv}. In general case there always exists the unique maximal and minimal e.s., see \cite{ABK,PaMax2,PaIzv}.

We also notice that, in view of
\cite[Corollary~7.1]{PaJHDE}, after possible correction on a set of null
measure an e.s. $u(t,x)$ is continuous on $[0,+\infty)$ as a map $t\mapsto u(t,\cdot)$ into
$L^1_{loc}(\R^n)$. In order to simplify formulations, we will always suppose that e.s. satisfy this continuity property $$u(t,\cdot)\in C([0,+\infty),L^1_{loc}(\R^n)).$$
In view of (\ref{ini}), we claim that $u(0,x)=u_0(x)$ and in (\ref{ini}) we may replace the essential limit by the usual one.
In the present paper we study the long time decay property of e.s. in the case when initial data is a perturbed periodic function. More precisely, we assume that the initial function $u_0(x)=p(x)+v(x)$, where $p(x), v(x)\in L^\infty(\R^n)$, $p(x)$ is periodic while $v(x)$ vanishes at infinity in the sense of strong mean value
\begin{equation}\label{van1}
\lim_{|A|\to\infty}\frac{1}{|A|}\int_A |v(x)|dx=0,
\end{equation}
where $A$ runs over Lebesgue measurable sets $A\subset\R^n$ of positive Lebesgue measure $|A|=\meas A$. Observe that the functions $p,v$ are uniquely defined (up to equality on a set of full measure) by the function $u_0$.
Let
\begin{equation}\label{per}
G=\{ \ e\in\R^n \ | \ p(x+e)=p(x) \ \mbox{ almost everywhere in } \R^n \ \}
\end{equation}
be the group of periods of $p$, it is not necessarily a lattice because $p(x)$ may be constant in some directions. For example, if $p\equiv\const$ then $G=\R^n$. The periodicity of $p$ means that the linear hull of $G$ coincides with $\R^n$, that is, there is a basis of periods of $p$. Denote by $H$ the maximal linear subspace containing in $G$. The dual lattice
$$
G'=\{ \ \xi\in\R^n \ | \ \xi\cdot e\in\Z \ \forall e\in G \ \}
$$
is indeed a lattice in the orthogonal complement $H^\perp$ of the space $H$ (we will prove this simple statement in Lemma~\ref{lem2} below). Observe also that $G'=L_0'$ in $H^\perp$, where
$L_0=G\cap H^\perp$, so that $G=H\oplus L_0$. It is rather well-known (at least for continuous periodic functions) that $L_0$ is a lattice in $H^\perp$. The case of measurable periodic functions requires some little modifications and, for the sake of completeness, we put the proof of this fact in Lemma~\ref{lem2}. Notice that we use more general notion of period contained in (\ref{per}). For the standard notion $p(x+e)\equiv p(x)$ (where the words ``almost everywhere'' are omitted) the group $G$ may have more complicate structure. For example, the group of periods of the Dirichlet function on $\R$ is a set of rationals $\Q$, which is not a lattice in $\R$. We introduce the torus $\T^d=\R^n/G=H^\perp/L_0$
of dimension $d=\dim H^\perp=n-\dim H$ equipped with the normalized Lebesgue measure $dy$. The periodic function $p$ can be considered as a function on this torus $\T^d$: $p=p(y)$. Let
$$m=\int_{\T^d} p(y)dy$$ be the mean value of this function. Clearly, this value coincides with the mean value of the initial data:
$$
m=\lim_{R\to\infty}\frac{1}{|B_R|}\int_{B_R} u_0(x)dx,
$$
where $B_R$ is the ball $|x|<R$.

We will study the long time decay property of e.s. with respect to the following shift-invariant norm on $L^\infty(\R^n)$:
 \begin{equation}\label{normX}
\|u\|_X=\sup_{y\in\R^n} \int_{|x-y|<1} |u(x)|dx
\end{equation}
(where we denote by $|z|$ the Euclidean norm of a finite-dimensional vector $z$).
As was demonstrated in \cite{PaSIMA2}, this norm is equivalent to each of more general norms
\begin{equation}\label{normV}
\|u\|_V=\sup_{y\in\R^n} \int_{y+V} |u(x)|dx,
\end{equation}
where $V$ is any bounded open set in $\R^n$ (the original norm $\|\cdot\|_X$ corresponds to the unit ball $|x|<1$). For the sake of completeness we repeat the proof of this result in Lemma~\ref{equ} below. Obviously, norm (\ref{normX}) generates the stronger topology than one of $L^1_{loc}(\R^n)$.

We denote by $F$ the closed set of points $u\in\R$ such that the flux components $\varphi(u)\cdot\xi$ are not affine on any vicinity of $u$ for all $\xi\in G'$, $\xi\not=0$. In the case when such $\xi$ do not exist (i.e., when $G=H=\R^n$), we define $F$ as the set of $u$ such that the entire vector $\varphi(u)$ is not affine (that is, at least one its component is not affine) on any vicinity of $u$.
Our main result is the following decay property.

\begin{theorem}\label{thM}
Assume that the flux vector $\varphi(u)$ is \textbf{genuinely nonlinear} in the sense that for all $a<m$, $b>m$ the intervals $(a,m)$, $(m,b)$ intersect with $F$: $(a,m)\cap F\not=\emptyset$, $(m,b)\cap F\not=\emptyset$. Suppose that $u(t,x)$ is an e.s. of (\ref{1}), (\ref{2}). Then
\begin{equation}\label{dec}
\lim_{t\to+\infty}\|u(t,\cdot)-m\|_X=0.
\end{equation}
\end{theorem}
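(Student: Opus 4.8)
The plan is to reduce the perturbed-periodic case to the purely periodic one by exploiting the contraction and comparison properties of entropy solutions in the $X$-norm, and then to invoke the known decay result for periodic data. First I would decompose $u_0 = p + v$ as in the statement and let $u_p(t,x)$ be the (unique, space-periodic) entropy solution of \eqref{1} with initial data $p$. The decay of $u_p$, namely $\|u_p(t,\cdot)-m\|_X\to 0$ as $t\to\infty$, is exactly the periodic decay theorem of \cite{PaSIMA2} (or \cite{PaIzv}) under the genuine nonlinearity hypothesis on $F$; note the set $F$ in the present paper is tailored precisely to the lattice $G'$ so that this periodic result applies verbatim. Thus it suffices to show $\|u(t,\cdot)-u_p(t,\cdot)\|_X\to 0$ as $t\to\infty$.

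The heart of the argument is therefore a statement of ``asymptotic stability'': if two entropy solutions have initial data differing by a function $v$ that vanishes at infinity in the mean-value sense \eqref{van1}, then the difference of the solutions vanishes in $X$-norm as $t\to\infty$. I would establish this in two steps. Step one: an $L^1$-type contraction. For Kruzhkov entropy solutions with merely continuous flux one still has the local $L^1$-contraction / finite-speed-type estimates in the form used throughout the author's earlier work; concretely, for any ball $B_r(y)$ one controls $\int_{B_r(y)}|u(t,x)-u_p(t,x)|\,dx$ by $\int_{B_{r+Lt}(y)}|v(x)|\,dx$ for a suitable modulus $L$ (or, in the continuous-flux setting, by the analogous estimate with a propagation function coming from the modulus of continuity of $\varphi$). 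Taking the supremum over $y$ and $r\le 1$, this gives
\begin{equation}\label{pp1}
\|u(t,\cdot)-u_p(t,\cdot)\|_X \le C\sup_{y\in\R^n}\int_{B_{1+Lt}(y)}|v(x)|\,dx.
\end{equation}
Step two: the right-hand side of \eqref{pp1} need NOT go to zero — the sup over all balls of a fixed radius of $\int|v|$ is the $X$-norm of $v$, which is only finite, not small. This is the main obstacle, and it is where \eqref{van1} must be used in an essential, non-obvious way: the mean-value decay controls averages over LARGE sets, not the $X$-norm.

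To overcome this I would not compare $u$ directly with $u_p$ but rather run an approximation/splitting scheme. Fix $\varepsilon>0$. Using \eqref{van1}, I would find a large scale $R=R(\varepsilon)$ and split $v = v_1 + v_2$ where $v_1$ is supported where $|v|$ is ``large'' (a set of controlled density, handled by a change-of-variable / averaging over translates) and $v_2$ has small $X$-norm after the relevant time — more precisely, I would exploit that the equation is autonomous and translation-invariant together with the self-similar structure: for the rescaled solution $u^\lambda(t,x)=u(\lambda t,\lambda x)$ the perturbation rescales to $v^\lambda(x)=v(\lambda x)$, and \eqref{van1} is exactly the statement that $v^\lambda\to 0$ in the weak-$*$/mean sense as $\lambda\to\infty$. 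Combining this with the periodic decay of $u_p$ (so that after rescaling the periodic part converges to the constant $m$) and a compactness argument — strong precompactness of the family $\{u^\lambda(1,\cdot)\}$ in $L^1_{loc}$, which is available from the genuine nonlinearity via the author's localization/kinetic-formulation results — I would pass to the limit $\lambda\to\infty$ in the rescaled equation, identify every limit point as the constant $m$ (using uniqueness of the entropy solution with constant data and the fact that the rescaled initial data converge to $m$ in $L^1_{loc}$, since both $p(\lambda x)\to m$ weakly and hence, after one unit of time and using decay, strongly, and $v(\lambda x)\to 0$), and conclude $u^\lambda(1,\cdot)\to m$ in $L^1_{loc}$, which is precisely \eqref{dec}.

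I expect the genuinely technical points to be: (i) justifying the comparison estimate \eqref{pp1} for merely continuous flux (citing the author's existence/uniqueness machinery for the relevant propagation bounds, or replacing $L t$ by an appropriate sublinear-in-a-weak-sense propagation controlled by the modulus of continuity of $\varphi$); (ii) the strong precompactness of the rescaled family, which requires the precise genuine nonlinearity condition on $F$ relative to the period group $G'$ — this is the role of the carefully chosen set $F$ in the theorem; and (iii) the identification of limits, i.e. showing that the only entropy solution arising as a limit of the rescaled problems is the constant $m$, for which I would use the uniqueness of space-periodic entropy solutions together with the mean-value invariant $m$ being preserved under the rescaling limit. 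The combination of the self-similar rescaling, the periodic decay theorem as a black box, and the compactness from genuine nonlinearity is, I believe, the natural route; the main obstacle is genuinely the compactness step, since everything else is a matter of assembling known contraction, comparison, and uniqueness facts from the cited papers.
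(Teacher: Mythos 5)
Your reduction to ``show $\|u(t,\cdot)-u_p(t,\cdot)\|_X\to 0$'' correctly identifies the obstacle (the $L^1$-contraction only bounds the difference by $\|v\|_X$, which is not small), but the rescaling/compactness scheme you propose to overcome it has two genuine gaps. First, even if it worked, the conclusion $u^\lambda(1,\cdot)\to m$ in $L^1_{loc}$, i.e. $\lambda^{-n}\int_{|y|<R\lambda}|u(\lambda,y)-m|\,dy\to 0$, is an averaged, self-similar decay statement; it says nothing about unit balls centered at points $y$ with $|y|\gg\lambda$ (nor even about fixed balls, uniformly in the center), so it is strictly weaker than the shift-invariant $X$-norm decay \eqref{dec}. ``Which is precisely \eqref{dec}'' is false. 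Second, the strong $L^1_{loc}$ precompactness of $\{u^\lambda(1,\cdot)\}$ is not available here: the hypothesis only requires non-affineness of $\varphi(u)\cdot\xi$ for $\xi$ in the dual lattice $G'\subset H^\perp$ (and only near points of $F$ close to $m$), while the flux may be perfectly linear in all directions of $H$ and in directions transversal to $G'$; kinetic/averaging compactness needs non-degeneracy in all directions, so this step cannot be justified from the stated assumption. The identification of the limit is also shaky, since the rescaled initial data $p(\lambda x)+v(\lambda x)$ converge to $m$ only weakly, not in $L^1_{loc}$.

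The paper's route is different and avoids compactness entirely. The key device is a periodic envelope of the perturbation: for a lattice $G$ and large $r$ one sets $v_r^{+}(x)=\sup_{e\in G}v(x+re)$ and $v_r^{-}(x)=\inf_{e\in G}v(x+re)$, which are $rG$-periodic, sandwich $v$, and — by the vanishing condition \eqref{van1}, recast as \eqref{van} — have mean values tending to $0$ as $r\to\infty$. One then picks $\alpha^\pm\in F$ with $\alpha^-<m<\alpha^+$ (this is where the ``for all $a<m$, $b>m$'' form of the nonlinearity assumption enters, allowing $\alpha^+-\alpha^-$ arbitrarily small), builds $rG$-periodic data $u_0^\pm$ with means exactly $\alpha^\pm$ sandwiching $u_0$, applies the periodic decay theorem (Theorem~\ref{thPD}, i.e. \cite[Theorem~1.3]{PaNHM}, not \cite{PaSIMA2}) to the bounding solutions, and uses the comparison principle to get $\limsup_t\|u(t,\cdot)-m\|_X\le C(\alpha^+-\alpha^-)$; letting $\alpha^\pm\to m$ gives \eqref{dec}. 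The general (non-lattice) $G$ is then handled by reducing along $H$ and by $L^1$-approximation of $v$ together with the $L^1$-contraction and monotonicity of the maximal/minimal entropy solutions. If you want to salvage your plan, the missing idea to import is precisely this sup/inf-over-translates periodization of $v$, which converts ``vanishing at infinity in mean'' into ``small shift of the mean of periodic comparison data'' and lets the periodic decay theorem do all the work in the shift-invariant norm.
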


The condition $H=\R^n$ means that the function $p(x)$ is constant, $p\equiv m$. In this case, the requirement of Theorem~\ref{thM} reduces to the condition that the flux vector $\varphi(u)$ is not affine on any semivicinity
$(a,m)$, $(m,b)$ of the mean $m$. When $m=0$, Theorem~\ref{thM} was proved in \cite{PaSIMA2}. The case of arbitrary $m$
reduces to the case $m=0$ by the change $u\to u-m$, $\varphi(u)\to\varphi(u+m)$. Thus, we may suppose in the sequel that
$p\not\equiv\const$ and therefore the lattice $G'$ is not trivial.

Remark that the genuine nonlinearity requirement in Theorem~\ref{thM} implies that $m\in F$ because of closeness of this set. Generally, under this weaker condition $m\in F$ the decay property fails, cf. Example~\ref{ex1} below. But,
in periodic case $v\equiv 0$, the decay property (\ref{dec}) holds under the weaker condition $m\in F$,
that is,
\begin{align}\label{gn}
\forall\xi\in G', \xi\not=0 \mbox{ the flux components } \varphi(u)\cdot\xi \nonumber \\ \mbox{ are not affine on any vicinity of } m.
\end{align}
In the standard case when $G$ is a lattice (that is, when $\dim H=0$) it was proved in \cite[Theorem~1.3]{PaNHM}, see also earlier papers \cite{Daferm,PaAIHP,ChF}. The general case of arbitrary $H$ easily reduces to the case $\dim H=0$. We provide the details in the following theorem.

\begin{theorem}\label{thPD}
Suppose that the initial function $u_0=p(x)$ is periodic with a group of periods $G$, and condition (\ref{gn}) is satisfied. Then the e.s. $u=u(t,x)$ of problem (\ref{1}), (\ref{2}) exhibits the decay property
\begin{equation}\label{decp}
\esslim_{t\to+\infty} u(t,\cdot)=m=\int_{\T^d} u_0(x)dx \ \mbox{ in } L^1(\T^d).
\end{equation}
\end{theorem}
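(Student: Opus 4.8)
The plan is to reduce Theorem~\ref{thPD} to the standard lattice case $\dim H=0$, which is \cite[Theorem~1.3]{PaNHM}, by descending the equation to the quotient space $H^\perp\cong\R^d$.

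First I would invoke the uniqueness and space-periodicity of e.s. for periodic Cauchy problems: since the linear hull of $G$ is all of $\R^n$, the e.s. $u$ of (\ref{1}), (\ref{2}) is unique and periodic with a group of periods containing $G$, hence in particular $u(t,x+e)=u(t,x)$ for every $e\in H$, so $u$ depends only on $y:=\pr_{H^\perp}x$. I would then pass to orthogonal coordinates $x=(x',y)$, $x'\in H$, $y\in H^\perp\cong\R^d$, adapted to the splitting $\R^n=H\oplus H^\perp$; under this rotation the flux vector is rotated accordingly, the divergence splits as $\div_{x'}(\cdot)+\div_y(\cdot)$, and because $u$ is independent of $x'$ the $x'$-part drops out in $\D'$. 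This leads to the reduced conservation law
\begin{equation}\label{red}
u_t+\div_y\psi(u)=0,\qquad u(0,y)=p(y),\qquad (t,y)\in\R_+\times\R^d,
\end{equation}
where $\psi_j(u)=\varphi(u)\cdot e_j$ for an orthonormal basis $e_1,\dots,e_d$ of $H^\perp$, and $p$ is periodic with respect to the lattice $L_0=G\cap H^\perp$, which is a full-rank lattice in $H^\perp$; in particular the reduced problem is of the standard type with trivial maximal period subspace.

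To make the descent rigorous I would insert into the Kruzhkov entropy inequality (\ref{entr}) test functions of the product form $f(t,x)=h(t,y)\rho(x')$, with $0\le h\in C_0^1(\R_+\times\R^d)$ and $0\le\rho\in C_0^\infty(H)$ normalized by $\int\rho\,dx'=1$. Since $u$, and hence $\sign(u-k)(\varphi(u)-\varphi(k))$, is independent of $x'$, the contribution of $\nabla_{x'}f$ integrates to zero while the remaining terms integrate $\rho$ out, yielding the entropy inequality for (\ref{red}) with arbitrary test function $h$; the initial condition is recovered from (\ref{ini}) the same way. Thus $u=u(t,y)$ is the e.s. of the periodic problem (\ref{red}) on the torus $\T^d=H^\perp/L_0$.

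Finally I would check that the nonlinearity hypothesis transfers unchanged. By Lemma~\ref{lem2} the dual lattice of $L_0$ in $H^\perp$ is precisely $G'$, and $\psi(u)\cdot\xi=\varphi(u)\cdot\xi$ for $\xi\in H^\perp\cong\R^d$; hence (\ref{gn}) for $p$ and $\varphi$ is exactly the requirement that $\psi(u)\cdot\xi$ be non-affine near $m$ for every nonzero $\xi$ in the dual lattice of $L_0$, and $m=\int_{\T^d}p\,dy$ is the mean of the initial data of (\ref{red}). Applying \cite[Theorem~1.3]{PaNHM} to (\ref{red}) gives $\esslim_{t\to+\infty}u(t,\cdot)=m$ in $L^1(\T^d)$, which is (\ref{decp}). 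The only real obstacle is this dimensional descent: it genuinely uses the uniqueness/periodicity of e.s. (to know that $u$ is $H$-invariant) and the correct identification of the reduced flux as the $H^\perp$-component of $\varphi$, which is what preserves the dual-lattice condition; the rest is bookkeeping.
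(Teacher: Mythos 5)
Your proposal is correct and takes essentially the same route as the paper: use uniqueness of the periodic e.s.\ to get $G$-periodicity (hence $H$-invariance) of $u$, descend to a $d$-dimensional problem on $H^\perp$ whose initial datum is $L_0$-periodic, invoke Lemma~\ref{lem2} to identify the dual lattice with $G'$ so that condition (\ref{gn}) transfers, and apply \cite[Theorem~1.3]{PaNHM}. The only differences are cosmetic: you work in orthogonal coordinates adapted to $\R^n=H\oplus H^\perp$ and make the descent explicit with product test functions, while the paper uses a general non-degenerate linear change $y=Qx$ and tracks the flux and dual lattice through $Q$ and $(Q^*)^{-1}$.
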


\begin{proof}
Observe that for all $e\in G$ $u_0(x+e)=u_0(x)$ a.e. in $\R^n$. Obviously, $u(t,x+e)$ is an e.s. of (\ref{1}), (\ref{2})   with the same initial data $u_0(x)$. By the uniqueness of an e.s., known in the case of periodic initial function, we claim that $u(t,x+e)=u(t,x)$ a.e. in $\Pi$, that is, $u(t,x)$ is $G$-periodic in the space variables.
In particular, $u(t,\cdot)\in L^1(\T^d)$ for a.e. $t>0$ and relation (\ref{decp}) is well-defined.
We choose a non-degenerate linear operator $Q$ in $\R^n$, which transfers the space $H$ into the standard subspace
$$
\R^{n-d}=\{ \ x=(x_1,\ldots,x_n) \ | \ x_i=0 \ \forall i=1,\ldots,d \ \}.
$$
After the change $y=Qx$ our problem reduces to the problem
\begin{equation}\label{1r}
v_t+\div_y\tilde\varphi(v)=0, \quad v(0,y)=v_0(y)\doteq u_0(Q^{-1}(y)),
\end{equation}
where the flux $\tilde\varphi(v)=Q\varphi(v)$. As is easy to verify, $u(t,x)=v(t,Qx)$, where $v(t,y)$ is an e.s. of (\ref{1r}). Observe that $v_0(y)$ is periodic with the group of periods
$\tilde G=Q(G)$. Therefore, the e.s. $v(t,y)$  is space periodic with the group of periods containing $\tilde G$. In particular, the functions $v_0(y)$, $v(t,y)$ are constant in directions $\R^{n-d}=Q(H)$: $v_0(y)=v_0(y_1,\ldots,y_d)$, $v(t,y)=v(t,y_1,\ldots,y_d)$ with $v_0(y')\in L^\infty(\R^d)$, $v(t,y')\in L^\infty(\R_+\times\R^d)$.  This readily implies that $v(t,y')$ is an e.s. of the low-dimensional problem
\begin{equation}\label{1l}
v_t+\div_{y'}\tilde\varphi(v)=v_t+\sum_{i=1}^d\tilde\varphi_i(u)=0, \quad v(0,y')=v_0(y').
\end{equation}
Observe that $v_0(y')$ is periodic with the lattice of periods
$$
\tilde L=\tilde G\cap\R^d=\{ \ y=(y_1,\ldots,y_d)\in\R^d \ | \ (y_1,\dots,y_d,0,\ldots,0)\in \tilde G \ \}
$$
(by Lemma~\ref{lem2} it is indeed a lattice). Using again Lemma~\ref{lem2}, we find that the dual lattice
$\tilde L'=\tilde G'=(Q^*)^{-1} G'$, where $Q^*$ is a conjugate operator. Observe that for each nonzero
$\zeta\in\tilde L'$ the vector $\xi=Q^*\zeta\in G'$, and
$$
\zeta\cdot\pr_{\R^d}\tilde\varphi(v)=\zeta\cdot Q\varphi(v)=Q^*\zeta\cdot\varphi(v)=\xi\cdot\varphi(v).
$$
By condition (\ref{gn}) we claim that the functions $v\to\zeta\cdot\pr_{\R^d}\tilde\varphi(v)$ are not affine in any vicinity of $m$, for every $\zeta\in\tilde L'$. By the decay property \cite[Theorem~1.3]{PaNHM} applied to the e.s. $v(t,y')$ of (\ref{1l}) we claim that
\begin{equation}\label{decp1}
\esslim_{t\to+\infty} v(t,\cdot)=\tilde m=\int_{\tilde\T^d} v_0(y')dy' \ \mbox{ in } L^1(\tilde\T^d),
\end{equation}
where $\tilde\T^d=\R^d/\tilde L=\R^n/\tilde G$ is a torus corresponding to the lattice $\tilde L$, and $dy'$ denotes the normalized Lebesgue measure on this torus. Making the change of variables $y=Qx$, which induces an isomorphism $Q:\T^d\to\tilde\T^d$, we find that
$$
\tilde m=\int_{\T^d} u_0(x)dx=m,
$$
and that (\ref{decp1}) reduces to the relation (\ref{decp}). The proof is complete.
\end{proof}

Notice that condition (\ref{gn}) is precise. In fact, if it fails, we may find a nonzero vector $\xi\in G'$ and constants $\delta>0$, $k\in\R$ such that $\xi\cdot\varphi(u)-ku=\const$ on the segment $|u-m|\le\delta$. We define the hyperspace
$E=\{x\in\R^n \ | \ \xi\cdot x=0\}$. The linear functional $\xi$ is a homomorphism of the group $G$ into $\Z$. The range
of this homomorphism is a subgroup $r\Z\subset\Z$ for some $r\in\N$. Denote by $G_1=E\cap G$ the kernel of $\xi$.
There exists an element $e_0\in G$ such that $\xi\cdot e_0=r$. Then, as is easy to verify, the map $(e,m)\to e+me_0$
forms a group isomorphism of $G_1\oplus\Z$ onto $G$. We choose $n-1$ independent vectors $\zeta_i$ such that $\zeta_i\cdot e_0=0$, $i=1,\ldots,n-1$ and make the linear change $y=y(t,x)$
\begin{equation}\label{ch}
y_i=\zeta_i\cdot x, \ i=1,\ldots,n-1, \quad y_n=\xi\cdot x-kt,
\end{equation}
which reduces (\ref{1}) to the conservation law
\begin{equation}\label{ch1}
u_t+\div_y\tilde\varphi(u)=0
\end{equation}
with the last flux component $\tilde\varphi_n(u)=\xi\varphi(u)-ku$ being constant on the segment $|u-m|\le\delta$.
If an initial data $\tilde u_0(y)$ satisfies the condition $|\tilde u_0-m|\le\delta$ then a corresponding e.s. $u=\tilde u(t,x)$ of (\ref{ch1}) also satisfies the condition $|\tilde u(t,x)-m|\le\delta$ a.e. on $\Pi$, by the maximum-minimum principle \cite[Corollary~2.1]{BenKr}. Since $\tilde\varphi_n(u)$ is constant on the segment $|u-m|\le\delta$, $\tilde u(t,y)$ is an e.s. of the equation
\begin{equation}\label{ch2}
u_t+\div_{y'}\tilde\varphi(u)=u_t+\sum_{i=1}^{n-1}(\tilde\varphi_i(u))_{y_i}=0,
\end{equation}
where $y'=(y_1,\ldots,y_{n-1})\in\R^{n-1}$. This readily implies that for a.e. fixed $y_n\in\R$ the function
$\tilde u(t,y',y_n)$ is an e.s. of the Cauchy problem for the low-dimensional equation (\ref{ch2}), considered in
the domain $\R_+\times\R^{n-1}$, with the corresponding initial function $\tilde u_0(y',y_n)$. Assume that the function
$\tilde u_0(y',y_n)$ is $y'$-periodic (with some group of periods) with the mean value $m(y_n)=\frac{1}{|P|}\int_{P} u_0(y',y_n)dy'$, $P\subset\R^{n-1}$ being the periodicity cell (or, the same, the corresponding torus). Then for a.e. $y_n\in\R$ the mean value of $\tilde u(t,\cdot,y_n)$ does not depend on $t$ and equals $m(y_n)$ (see, for instance, \cite{PaMax1}). If this function $m(y_n)$ is not constant (a.e. in $\R$) then the e.s. $\tilde u(t,y)$ cannot satisfy the decay property. In fact, if $\tilde u(t,\cdot)-m\to 0$ as $t\to+\infty$ in $L^1_{loc}(\R^n)$, then for each interval $I\subset\R$
$$
\left|\int_I (m(y_n)-m)dy_n\right|=\frac{1}{|P|}\left|\int_{P\times I}(\tilde u(t,y)-m)dy\right|\le\frac{1}{|P|}\int_{P\times I}|\tilde u(t,y)-m|dy,
$$
which implies, in the limit as $t\to+\infty$, that $\int_I (m(y_n)-m)dy_n=0$. Since $I$ is an arbitrary interval, we find that $m(y_n)=m$ a.e. in $\R$, which contradicts to our assumption.

Now we choose a function $v(x)\in C(E)$ such that $\|v\|_\infty\le\delta/2$ and that $v(x)$ is periodic with the group of periods $G_1$ and with zero mean value. Since $G_1=H\oplus (E\cap L_0)$, such $v(x)$ actually exists, this function is constant in the direction $H$ and is periodic in $E\cap H^\perp$ with exactly the lattice of periods $E\cap L_0$. We set
$$
u_0(x)=m+v(\pr(x))+\frac{\delta}{2}\sin(2\pi\xi\cdot x/r),
$$
where $\pr(x)\in E$ is the projection of $x$ on $E$ along the vector $e_0$ (so that $x-\pr(x)\parallel e_0$).
Then $\|u_0-m\|_\infty\le\delta$. Let us show that $u_0(x)$ is periodic with the group of periods $G$. Since vectors
$e\in G_1$ and $e_0$ are periods of $u_0$, then the group $G=G_1+\Z e_0$ consists of period of $u_0$. On the other hand, if
$e\in\R^n$ is a period of $u_0$ then it can be decomposed into a sum $e=e_1+\lambda e_0$, where $e_1\in E$, $\lambda\in\R$.
For $x=x'+se_0$, $x'\in E$, we have
$$
u_0(x+e)=m+v(x'+e_1)+\frac{\delta}{2}\sin(2\pi(s+\lambda))=u_0(x)=m+v(x')+\frac{\delta}{2}\sin(2\pi s).
$$
Averaging this equality over $x'$, we obtain that $\sin(2\pi(s+\lambda))=\sin(2\pi s)$ for all $s\in\R$, which implies that $\lambda\in\Z$ and that $v(x'+e_1)=v(x')$ for all $x'\in E$. Therefore, $e_1\in G_1$
(remind that $G_1$ is the group of periods of $v$). Hence $e=e_1+\lambda e_0\in G_1+\Z e_0=G$. We proved that the group
of periods of $u_0$ is exactly $G$. It is clear that $m$ is the mean value of $u_0$.

Now, we are going to show that an e.s. $u(t,x)$ of (\ref{1}), (\ref{2}) with the chosen initial data does not satisfy decay property (\ref{decp}). After the change (\ref{ch}) the initial function $u_0(x)$ transforms into $\displaystyle \tilde u_0(y)=m+\tilde v(y')+\frac{\delta}{2}\sin(2\pi y_n/r)$,
the function $\tilde v(y')\in C(\R^{n-1})$ is determined by the identity $v(x)=\tilde v(y(x))$, where $y(x)=y(0,x)$, $x\in E$, is a linear isomorphism $E\to\R^{n-1}$. Obviously, the function $\tilde v(y')$ is periodic with the group of periods $y(G_1)$ and zero mean value. Therefore, the mean value of initial data over the variables $y'$ equals $\displaystyle m(y_n)=m+\frac{\delta}{2}\sin(2\pi y_n/r)$ and it is not constant. In this case it has been already demonstrated that an e.s. $\tilde u(t,y)$ of the Cauchy problem
for equation (\ref{ch1}) does not satisfy the decay property. Due to the identity $u(t,x)=\tilde u(t,y(t,x))$, we see that an e.s. of original problem does not satisfy (\ref{decp}) either.

\section{Proof of the main results}
\subsection{Auxiliary lemmas}

\begin{lemma}\label{lem2}
Let $G$ be the group of periods of a periodic function $p(x)\in L^\infty(\R^n)$, and let, as in Introduction, $H$ be a maximal linear subspace of $G$, $L_0=G\cap H^\perp$, and let $G'$ be a dual group to $G$. Then

(i) $L_0$ is a lattice of dimension $d=\dim H^\perp$;

(ii) $G'$ is a lattice in $H^\perp$ of dimension $d$, and $G'=L_0'$ in $H^\perp$.
\end{lemma}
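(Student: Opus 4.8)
The plan is to reduce everything to the fact that the group $G$ is \emph{closed} in $\R^n$, this being the only place where the mere measurability of $p$ (as opposed to continuity) requires a small extra argument. On every ball the function $p$ is integrable and the translation map $e\mapsto p(\cdot+e)$ is continuous from $\R^n$ into $L^1_{loc}(\R^n)$ (it is continuous in $L^1$ on compactly supported approximants of $p$). Hence, if $e_j\in G$ and $e_j\to e$, then $p(\cdot+e_j)\to p(\cdot+e)$ in $L^1_{loc}$ while $p(\cdot+e_j)=p$ a.e., so $p(\cdot+e)=p$ a.e., i.e.\ $e\in G$. (I would also note in passing that the sum of two linear subspaces contained in $G$ is again one, so the maximal linear subspace $H\subset G$ is well defined, being the sum of all of them.) Once $G$ is closed I will reprove, in the form needed, the relevant part of the structure theory of closed subgroups of $\R^n$.

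For part (i) I would show that $L_0=G\cap H^\perp$ is a \emph{discrete} subgroup of $H^\perp$. Suppose not; then there are nonzero $e_j\in L_0$ with $e_j\to 0$, and, passing to a subsequence, $e_j/|e_j|\to\nu$ for some unit vector $\nu\in H^\perp$. For fixed $t\in\R$ set $k_j=\lfloor t/|e_j|\rfloor\in\Z$; a routine estimate gives $k_je_j\to t\nu$, and since $k_je_j\in G$ and $G$ is closed, $t\nu\in G$. Thus $\R\nu\subset G$, and because $G$ is a group containing $H$, the linear subspace $H+\R\nu$ lies in $G$ and strictly contains $H$ (as $0\ne\nu\in H^\perp$), contradicting the maximality of $H$. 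Hence $L_0$ is discrete, so it is a lattice $L_0=\Z f_1\oplus\cdots\oplus\Z f_k$ with $f_1,\dots,f_k$ linearly independent in $H^\perp$. To see $k=d$, note that any $e\in G$ splits as $e=e_H+e_\perp$ with $e_H\in H\subset G$, whence $e_\perp=e-e_H\in G\cap H^\perp=L_0$; so $G=H\oplus L_0$, and taking linear hulls, $\R^n=\mathrm{span}\,G=H\oplus\mathrm{span}\,L_0$, forcing $\mathrm{span}\,L_0=H^\perp$ and $k=d$.

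For part (ii) I would first show $G'\subset H^\perp$: if $\xi\in G'$ and $e\in H$, then $te\in H\subset G$ for all $t\in\R$, so $t(\xi\cdot e)=\xi\cdot(te)\in\Z$ for every $t$, forcing $\xi\cdot e=0$; hence $\xi\perp H$. Then, using $G=H\oplus L_0$ and $\xi\cdot H=0$, for $\xi\in H^\perp$ we have $\xi\in G'$ iff $\xi\cdot e\in\Z$ for all $e\in L_0$, i.e.\ iff $\xi$ lies in the dual lattice $L_0'$ of $L_0$ computed inside $H^\perp$; so $G'=L_0'$. Finally, since $L_0$ is a lattice of full rank $d$ in the $d$-dimensional space $H^\perp$ — say $\{f_i\}$ is simultaneously a $\Z$-basis of $L_0$ and an $\R$-basis of $H^\perp$ — its dual $L_0'$ is generated over $\Z$ by the dual basis $\{f_i^\ast\}\subset H^\perp$, hence is again a lattice of dimension $d$ in $H^\perp$. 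This yields (ii).

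The only step I expect to require genuine care is the discreteness argument for $L_0$ in the second paragraph: promoting a sequence of arbitrarily small periods into a whole line of periods transverse to $H$, which is precisely where the closedness of $G$ and the maximality of $H$ are used together. The closedness of $G$ itself (first paragraph) is the modest additional ingredient replacing the continuity of $p$ exploited in the classical, continuous-function treatment; the remaining manipulations — the rank count and the lattice-duality facts — are routine.
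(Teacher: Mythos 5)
Your proof is correct, but it routes the one nontrivial point differently from the paper. Both arguments hinge on the same contradiction: a sequence of nonzero periods in $H^\perp$ tending to $0$ must produce a whole line $\R\nu\subset G$ transverse to $H$, violating the maximality of $H$; and both finish with the same bookkeeping ($G=H\oplus L_0$, rank count, $G'\subset H^\perp$, $G'=L_0'$). The difference is how the mere measurability of $p$ is handled. You isolate it in a preliminary lemma: $G$ is closed in $\R^n$, proved via continuity of translations in $L^1_{loc}$, after which the line of periods comes from the standard closed-subgroup argument $k_je_j\to t\nu$ with $k_j=\lfloor t/|e_j|\rfloor$. The paper instead never mentions closedness of $G$: it mollifies, setting $v=p\ast w\in C^1$ for $w\in C_0^1$, notes $v(\cdot+h_k)=v$, deduces $\nabla v\cdot\xi=0$ so each mollification is constant along the limiting direction $\xi$, and then sends the mollification parameter to infinity (approximate identity) to conclude $p(\cdot+\alpha\xi)=p$ a.e.\ for all $\alpha$. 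Your version buys a cleaner separation of concerns (the measure-theoretic input is used exactly once, and the rest is the general structure theory of closed subgroups, so it would apply verbatim to any closed subgroup of $\R^n$); the paper's version buys self-containedness, manufacturing the line of periods for $p$ directly by convolution without invoking closedness or the floor-function approximation. A minor further difference in (ii): the paper proves discreteness of $G'$ directly by picking a basis of $\R^n$ inside $G$, whereas you obtain it as a corollary of $G'=L_0'$ together with the explicit dual basis of a $\Z$-basis of $L_0$; both are fine, and your aside that the maximal linear subspace $H$ is well defined (sum of all linear subspaces contained in $G$) is a legitimate point the paper leaves implicit.
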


\begin{proof}
(i) We have to prove that the group $L_0$ is discrete, that is, all its points are isolated. Since $L_0$ is a group it is sufficient to show that $0$ is an isolated point of $L_0$. Assuming the contrary, we find a sequence $h_k\in L_0$, such that $h_k\not=0$, $h_k\to 0$ as $k\to\infty$. By compactness of the unit sphere $|x|=1$, we may suppose that the sequence
$|h_k|^{-1}h_k\to\xi$ as $k\to\infty$, where $\xi\in H^\perp$, $|\xi|=1$. Let $w(x)\in C_0^1(\R^n)$ and
$v(x)$ be the convolution $v=p\ast w(x)=\int_{\R^n}p(x-y)w(y)dy$. By known property of convolution $v(x)\in C^1(\R^n)$. Further, for each $e\in G$ and $x\in\R^n$
$$
v(x+e)=\int_{\R^n}p(x-y+e)w(y)dy=\int_{\R^n}p(x-y)w(y)dy=v(x)
$$
and in particular $v(x+h_k)=v(x)$ $\forall k\in\N$. Since $v$ is differentiable,
\begin{equation}\label{lm1}
0=|h_k|^{-1}(v(x+h_k)-v(x))=\nabla v(x)\cdot |h_k|^{-1}h_k+\varepsilon_k,
\end{equation}
where $\varepsilon_k\to 0$ as $k\to\infty$. Passing in (\ref{lm1}) to the limit as $k\to\infty$, we obtain that
$\frac{\partial v(x)}{\partial\xi}=\nabla v(x)\cdot\xi=0$ for all $x\in\R^n$. Therefore, $v$ is constant in the direction $\xi$: $v(x+s\xi)=v(x)$ for all $s\in\R$, $x\in\R^n$. We choose a nonnegative function $w(x)\in C_0^1(\R^n)$ such that $\int_{\R^n} w(x)dx=1$ and set $\omega_r(x)=r^n\omega(rx)$, $r\in\N$. This sequence (an approximate unity) converges to Dirac $\delta$-function as $r\to\infty$ weakly in the space of distributions $\D'(\R^n)$. The corresponding sequence of averaged functions $v_r=p\ast w_r(x)$ converges to $p$ as $r\to\infty$ in $L^1_{loc}(\R^n)$. As was already established, the functions $v_r$ are constant in the direction $\xi$. Therefore, for each $\alpha\in\R$, $R>0$
$$
\int_{|x|<R} |v_r(x+\alpha\xi)-v_r(x)|dx=0.
$$
Passing in this relation to the limit as $r\to\infty$, we obtain that
$$
\int_{|x|<R} |p(x+\alpha\xi)-p(x)|dx=0 \quad \forall R>0,
$$
which implies that $p(x+\alpha\xi)=p(x)$ a.e. in $\R^n$, that is, $\alpha\xi\in G$. Thus,
the linear subspace $H_1=\{ \ x+\alpha\xi \ | \ x\in H, \alpha\in\R \ \}\subset G$. Since $\xi\in H^\perp$, $\xi\not=0$,
then $H\subsetneq H_1$. But this contradicts to the maximality of $H$. This contradiction proves that $L_0$ is a discrete additive subgroup of $\R^n$, i.e., a lattice. By the construction, $G=H\oplus L_0$. Since $G$ generates the entire space $\R^n$, then $L_0$ must generate $H^\perp$, that is, $\dim L_0=d$.

(ii) Since $\dim G=n$, we can choose a basis $e_k$, $k=1,\ldots,n$, of the linear space $\R^n$ laying in $G$. We define
$R=\max\limits_{k=1,\ldots,n}|e_k|$, $\delta=1/R$. Let $\xi\in G'$, $|\xi|<\delta$. Then
$|\xi\cdot e_k|\le|\xi||e_k|\le |\xi|R<1$. Since $\xi\cdot e_k\in\Z$ we claim that $\xi\cdot e_k=0$ for all $k=1,\ldots,n$.
Since $e_k$, $k=1,\ldots,n$, is a basis, this implies that $\xi=0$. We obtain that the ball $|\xi|<\delta$ contains only zero element of the group $G'$. This means that this group is discrete and therefore it is a lattice. If $\xi\in G'$, $e\in H$ then $\alpha\xi\cdot e=\xi\cdot\alpha e\in\Z$ for all $\alpha\in\R$. This is possible only if $\xi\cdot e=0$. This holds for every $e\in H$, that is, $\xi\in H^\perp$. Obviously, for such $\xi$, the requirement $\xi\in G'$ reduces to the condition $\xi\cdot e\in\Z$ for all $e\in L_0$. We conclude that $G'=L_0'$. Since $L_0$ is a lattice, this in particular implies that $\dim G'=\dim L_0=d$.
\end{proof}

It is useful to rewrite condition (\ref{van1}) in the following equivalent form
\begin{lemma}\label{lem3}
Condition (\ref{van1}) is equivalent to the following one:
\begin{equation}\label{van}
 \forall\lambda>0 \quad \meas\{ \ x\in\R^n: \ |v(x)|>\lambda \ \}<+\infty.
\end{equation}
\end{lemma}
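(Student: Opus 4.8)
The plan is to prove the two implications separately, using only that $v\in L^\infty(\R^n)$; set $M=\|v\|_\infty$ and, for $\lambda>0$, write $E_\lambda=\{x\in\R^n : |v(x)|>\lambda\}$.

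To show that (\ref{van1}) implies (\ref{van}), I would argue by contraposition. Suppose (\ref{van}) fails, so that $\meas E_\lambda=+\infty$ for some $\lambda>0$. Since $\meas(E_\lambda\cap\{|x|<R\})\to+\infty$ as $R\to+\infty$, we may pick measurable sets $A\subset E_\lambda$ of arbitrarily large (finite, positive) measure. For each such $A$ we have $\frac{1}{|A|}\int_A|v(x)|dx\ge\frac{1}{|A|}\cdot\lambda|A|=\lambda$, so the limit in (\ref{van1}) cannot be zero. Hence (\ref{van1}) forces (\ref{van}).

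For the converse, assume (\ref{van}). Given any measurable $A\subset\R^n$ with $0<|A|<+\infty$ and any $\lambda>0$, split $A$ into $A\cap E_\lambda$ and its complement in $A$; on the latter $|v|\le\lambda$, while on the former we bound $|v|\le M$ and $|A\cap E_\lambda|\le\meas E_\lambda$. This gives $\int_A|v(x)|dx\le\lambda|A|+M\meas E_\lambda$, hence $\frac{1}{|A|}\int_A|v(x)|dx\le\lambda+\frac{M\meas E_\lambda}{|A|}$. Now fix $\varepsilon>0$, take $\lambda=\varepsilon/2$ (so that $\meas E_\lambda<+\infty$ by (\ref{van})), and observe that the right-hand side is $<\varepsilon$ as soon as $|A|>2M\meas E_\lambda/\varepsilon$. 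Since this threshold depends on $A$ only through $|A|$, this is precisely the statement (\ref{van1}).

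There is no genuine obstacle here; the only point requiring a little care is that the quantities in (\ref{van1}) involve a supremum over all measurable sets of a given measure, so in the first implication one must exhibit concrete large subsets of $E_\lambda$ (achieved by intersecting with large balls), and in the second the estimate must be uniform in $A$ — which it is, because it depends on $A$ only via $|A|$. The boundedness of $v$ is used only to control the part of the integral where $|v|$ is large.
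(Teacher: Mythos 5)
Your proof is correct and follows essentially the same route as the paper: the direction (\ref{van}) $\Rightarrow$ (\ref{van1}) uses the identical splitting of $A$ into its parts inside and outside the superlevel set, and the converse is the same observation that a superlevel set of infinite measure supplies sets of arbitrarily large measure on which the average of $|v|$ stays $\ge\lambda$ (the paper picks subsets of measure $m$, you intersect with large balls — an immaterial difference).
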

\begin{proof}
Assuming (\ref{van}), we denote $A_\lambda=\{ \ x\in\R^n: \ |v(x)|>\lambda \ \}$, $\lambda>0$, so that $|A_\lambda|<+\infty$. Then for every measurable set $A$ of finite measure
$$
\int_A|v(x)|dx=\int_{A\setminus A_\lambda}|v(x)|dx+\int_{A\cap A_\lambda}|v(x)|dx\le \lambda|A|+\|v\|_\infty|A_\lambda|.
$$
Therefore,
$$
\limsup_{|A|\to\infty}\frac{1}{|A|}\int_A|v(x)|dx\le\lim_{|A|\to\infty}(\lambda+\|v\|_\infty|A_\lambda|/|A|)=\lambda.
$$
Since $\lambda>0$ is arbitrary, we deduce (\ref{van1}).
Conversely, suppose that (\ref{van1}) holds. Assuming that (\ref{van}) is violating, we can find $\lambda>0$ such that the set $A_\lambda$ has infinite measure. Then we can choose the sequence of measurable subsets
$A_m\subset A_\lambda$ such that $|A_m|=m$, $m\in\N$. Obviously,
$$
\frac{1}{|A_m|}\int_{A_m}|v(x)|dx\ge\lambda
$$
while $|A_m|=m\to\infty$ as $m\to\infty$. Since this contradicts (\ref{van1}), we conclude that (\ref{van}) is satisfied.
\end{proof}

We will denote by $L_0^\infty(\R^n)$ the subspace of functions from $L^\infty(\R^n)$ satisfying (\ref{van}). Obviously, $L_0^\infty(\R^n)$ contains functions vanishing at infinity as well as functions from the spaces $L^p(\R^n)$, $p>0$.

\begin{lemma}\label{equ}
The norms $\|\cdot\|_V$ defined in (\ref{normV}) are mutually equivalent.
\end{lemma}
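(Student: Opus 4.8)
The plan is to reduce everything to comparisons between the functionals $\|\cdot\|_V$ for $V$ a cube, exploiting two elementary properties of the window $V\mapsto\|\cdot\|_V$: monotonicity and translation invariance. First I would record that if $V\subseteq W$ are bounded open sets, then $y+V\subseteq y+W$ for every $y\in\R^n$, whence $\int_{y+V}|u|\,dx\le\int_{y+W}|u|\,dx$, and taking the supremum over $y$ gives $\|u\|_V\le\|u\|_W$. Similarly, for any $z\in\R^n$ the family $\{y+(V+z):y\in\R^n\}$ coincides with $\{y+V:y\in\R^n\}$, so $\|u\|_{V+z}=\|u\|_V$; the norm is insensitive to translations of $V$.

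The quantitative ingredient is a covering estimate: if a bounded open set $W$ is covered by $m$ translates $V+z_1,\dots,V+z_m$ of a bounded open set $V$, then $\|u\|_W\le m\|u\|_V$, since for each $y$ one has $\int_{y+W}|u|\,dx\le\sum_{j=1}^m\int_{y+z_j+V}|u|\,dx\le m\|u\|_V$. Given two bounded open sets $V_1,V_2$, I would then choose nonempty open cubes $Q_i$ and bounded open cubes $\widetilde Q_i$ with $Q_i\subseteq V_i\subseteq\widetilde Q_i$; writing $a_i$ for the side of $Q_i$ and $b_i$ for the side of $\widetilde Q_i$, a cube of side $b_1$ is contained in a grid of $m_1=\lceil b_1/a_2\rceil^n$ translates of a cube of side $a_2$. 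Combining monotonicity with the covering estimate,
$$\|u\|_{V_1}\le\|u\|_{\widetilde Q_1}\le m_1\|u\|_{Q_2}\le m_1\|u\|_{V_2},$$
and symmetrically $\|u\|_{V_2}\le m_2\|u\|_{V_1}$ with $m_2=\lceil b_2/a_1\rceil^n$, which is the asserted mutual equivalence.

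There is no real obstacle here; the only point needing a word of care is the covering of a large cube by translates of a small cube — one must note that the number of translates needed is finite and depends only on $n$ and the ratio of side lengths, uniformly in position, and that the pairwise overlaps of the subcubes are Lebesgue-null so the integral bound is unaffected. If one preferred to argue directly with the original ball norm $\|\cdot\|_X$, the same scheme works, covering a ball of radius $R$ by a number $N(n,R/\rho)$ of balls of radius $\rho$ by a standard volume count; the cube version merely makes the counting explicit.
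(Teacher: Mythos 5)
Your proof is correct and follows essentially the same route as the paper: both rest on the estimate $\|u\|_{V_1}\le m\|u\|_{V_2}$ obtained by covering (the closure of) $V_1$ with finitely many translates of $V_2$, applied symmetrically. The only difference is cosmetic — you produce the finite cover by an explicit grid of cubes with a counted constant $\lceil b_1/a_2\rceil^n$, whereas the paper extracts a finite subcover by compactness of $\Cl V_1$; also note that the overlaps of your subcubes need no attention at all, since for $|u|\ge 0$ the sum over any cover already dominates the integral.
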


\begin{proof}
Let $V_1,V_2$ be open bounded sets in $\R^n$, and $K_1=\Cl V_1$ be the closure of $V_1$. Then $K_1$ is a compact set while $y+V_2$, $y\in K_1$, is its open covering.
By the compactness there is a finite set $y_i$, $i=1,\ldots,m$, such that
$\displaystyle K_1\subset \bigcup\limits_{i=1}^m (y_i+V_2)$. This implies that for every $y\in\R^n$ and $u=u(x)\in L^\infty(\R^n)$
$$
\int_{y+V_1}|u(x)|dx\le\sum_{i=1}^m \int_{y+y_i+V_2}|u(x)|dx\le m\|u\|_{V_2}.
$$
Hence, $\forall u=u(x)\in L^\infty(\R^n)$
$$
\|u\|_{V_1}=\sup_{y\in\R^n}\int_{y+V_1}|u(x)|dx\le m\|u\|_{V_2}.
$$
Changing the places of $V_1$, $V_2$, we obtain the inverse inequality
$\|u\|_{V_2}\le l\|u\|_{V_1}$ for all $u\in L^\infty(\R^n)$, where $l$ is some positive constant. This completes the proof.
\end{proof}

\begin{proposition}\label{pro1}
Let $G$ be a lattice and values $\alpha^+,\alpha^-\in F$ be such that $\alpha^-<m<\alpha^+$. Then an e.s. $u(t,x)$ of (\ref{1}), (\ref{2})
satisfies the property
$$
\limsup_{t\to 0}\|u(t,\cdot)-m\|_X\le 2^n(\alpha^+-\alpha^-).
$$
\end{proposition}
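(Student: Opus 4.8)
The plan is to reduce the estimate to a statement of \emph{asymptotic confinement} of $u(t,\cdot)$ to the interval $[\alpha^-,\alpha^+]$, and then to close by an elementary pointwise inequality together with the bound $|B_1|\le 2^n$ for the Lebesgue measure of the unit ball $\{|x|<1\}$.

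\textbf{Reduction.} Every e.s.\ of (\ref{1}), (\ref{2}) lies between the minimal and maximal e.s., $u_-\le u\le u_+$, and $|u-m|\le|u_--m|+|u_+-m|$, so it suffices to treat $u=u_+$ (the case $u_-$ is symmetric). Next, for every real $w$ one has $|w-m|\le(\alpha^+-\alpha^-)+(w-\alpha^+)^+ +(\alpha^--w)^+$, where $(\,\cdot\,)^+=\max(\,\cdot\,,0)$ — checked separately for $w\le\alpha^-$, $\alpha^-\le w\le\alpha^+$, $w\ge\alpha^+$, using $\alpha^-<m<\alpha^+$. Inserting $w=u(t,x)$, integrating over $|x-y|<1$, bounding the last two integrals by $\|(u(t,\cdot)-\alpha^+)^+\|_X$ and $\|(\alpha^--u(t,\cdot))^+\|_X$, and taking the supremum over $y$ gives
$$
\|u(t,\cdot)-m\|_X\le 2^n(\alpha^+-\alpha^-)+\|(u(t,\cdot)-\alpha^+)^+\|_X+\|(\alpha^--u(t,\cdot))^+\|_X .
$$
Hence it is enough to show $\|(u(t,\cdot)-\alpha^+)^+\|_X\to0$ and $\|(\alpha^--u(t,\cdot))^+\|_X\to0$ as $t\to+\infty$.

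\textbf{Confinement.} I would prove $\|(u(t,\cdot)-\alpha^+)^+\|_X\to0$; the other limit is symmetric. Write $u_0=p+v$ and handle the two parts. For the periodic part, let $P(t,x)$ be the unique $G$-periodic e.s.\ with data $p$. By the machinery behind Theorem~\ref{thPD} (\cite[Theorem~1.3]{PaNHM} and the decay theory for periodic e.s.), the trajectory $\{P(t,\cdot)\}$ is relatively compact in the measure-valued sense and its $\omega$-limit is a stationary, torus-homogeneous measure-valued e.s.\ of barycenter $m$, supported on the set of levels about which all $\varphi(\cdot)\cdot\xi$, $\xi\in G'\setminus\{0\}$, are locally affine; since $\alpha^-\in F$, $\alpha^+\in F$ and $\alpha^-<m<\alpha^+$, that support is trapped in $[\alpha^-,\alpha^+]$, so $(P(t,\cdot)-\alpha^+)^+\to0$ in $L^1$ over a period, hence $\|(P(t,\cdot)-\alpha^+)^+\|_X\to0$. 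For the perturbation, $v\in L^\infty_0(\R^n)$ by Lemma~\ref{lem3}; by the Kruzhkov-type contraction of the mean value at infinity, $\lim_{|A|\to\infty}|A|^{-1}\int_A|u(t,\cdot)-P(t,\cdot)|\,dx$ is non-increasing in $t$ and vanishes at $t=0$, so $u(t,\cdot)-P(t,\cdot)\in L^\infty_0(\R^n)$ for all $t$; the genuine nonlinearity of the flux at the level $\alpha^+$ (resp.\ $\alpha^-$) is then what upgrades this to $\|(u(t,\cdot)-\alpha^+)^+\|_X-\|(P(t,\cdot)-\alpha^+)^+\|_X\to0$. Concretely, one may pass to the self-similar rescaling $u_\varepsilon(t,x)=u(t/\varepsilon,x/\varepsilon)$, which again solves (\ref{1}) and has initial data $p(x/\varepsilon)+v(x/\varepsilon)$ with $v(\cdot/\varepsilon)\to0$ in $L^1_{loc}$ (this is precisely (\ref{van1})) and $p(\cdot/\varepsilon)\rightharpoonup m$, and analyze the limit using the non-affineness of $\varphi(\cdot)\cdot\xi$ at $\alpha^\pm$, as in \cite{PaSIMA2}.

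\textbf{Where the difficulty lies.} Essentially all the work is in the confinement step, in two places. First, one must trap the periodic $\omega$-limit in $[\alpha^-,\alpha^+]$ \emph{using only $\alpha^\pm\in F$}: since $m$ need not belong to $F$, the periodic solution $P(t,\cdot)$ need not decay to $m$ at all — it only has to ``approximately decay'' into the band $[\alpha^-,\alpha^+]$ — so one must rule out the support of the limit sticking to the endpoints of some wider affine interval around $m$, or escaping into other affine intervals, which is exactly where the precise definition of $F$ (non-affineness of every $\varphi\cdot\xi$ on every vicinity) must be exploited. Second, one must show the non-integrable, not-pointwise-decaying perturbation $v$ does not affect the limsup of these quantities: $L^\infty_0$-smallness of $u(t,\cdot)-P(t,\cdot)$ by itself does not control the shift-invariant norm $\|\cdot\|_X$, so the genuine nonlinearity of $\varphi$ at the levels $\alpha^\pm$ has to be invoked — via the rescaling argument — to pass from control of the mean value at infinity to control of $\|\cdot\|_X$. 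Once confinement is in hand, the bookkeeping in the Reduction step finishes the proof.
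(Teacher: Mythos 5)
There is a genuine gap: both halves of your ``confinement'' step are asserted rather than proved, and neither follows from results available in (or cited by) the paper. (a) For the periodic background $P(t,\cdot)$ you invoke a measure-valued $\omega$-limit whose support is ``trapped in $[\alpha^-,\alpha^+]$ because $\alpha^\pm\in F$''. The cited decay theorem (\cite[Theorem~1.3]{PaNHM}, i.e.\ Theorem~\ref{thPD}) only gives decay to the mean $m$ \emph{under genuine nonlinearity at $m$}; in Proposition~\ref{pro1} the value $m$ need not lie in $F$, so $P(t,\cdot)$ need not decay at all, and the asymptotic confinement of $P$ to $[\alpha^-,\alpha^+]$ under the sole hypothesis $\alpha^\pm\in F$ is a new, nontrivial claim. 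You yourself note that the support of the limit could stick to a wider affine interval or escape into other affine windows and that ``this is exactly where $F$ must be exploited'' --- but that is precisely the argument that is missing. (b) For the perturbation you rely on a ``Kruzhkov-type contraction of the mean value at infinity'' for $u(t,\cdot)-P(t,\cdot)$; with merely continuous flux there is no $L^1$-type contraction between arbitrary entropy solutions (only the largest/smallest e.s.\ enjoy it), and this monotonicity is nowhere established here. Even granting it, you concede it does not control $\|\cdot\|_X$ and defer to a rescaling ``as in \cite{PaSIMA2}''; but that argument is for constant $p\equiv m$, and adapting it to a non-decaying periodic background is essentially the content of the present paper, not a quotable fact. (A minor further point: reducing to $u_+$ and $u_-$ via $|u-m|\le|u_+-m|+|u_--m|$ doubles the constant to $2^{n+1}(\alpha^+-\alpha^-)$; one should instead use $(u-\alpha^+)^+\le(u_+-\alpha^+)^+$, $(\alpha^--u)^+\le(\alpha^--u_-)^+$.)

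The paper's proof avoids both difficulties by never asking the mean-$m$ periodic solution to decay. It periodizes the perturbation over the coarse lattice $rG$: with $v_r^+(x)=\sup_{e\in G}v(x+re)$, $v_r^-(x)=\inf_{e\in G}v(x+re)$, these are $rG$-periodic envelopes of $v$, and Lemma~\ref{lem3} (finiteness of $\meas\{|v|>\varepsilon\}$) gives that their mean values over the cell $P_r$ tend to $0$ as $r\to\infty$. One then adds constants to form $rG$-periodic barriers $u_0^\pm$ with mean values \emph{exactly} $\alpha^\pm\in F$ and with $u_0^-\le u_0\le u_0^+$. Since $(rG)'=\frac1r G'$, condition (\ref{gn}) holds at the levels $\alpha^\pm$, so Theorem~\ref{thPD} applies directly to the barrier solutions $u^\pm$, giving $u^\pm(t,\cdot)\to\alpha^\pm$ in $L^1$ over a period and hence in $\|\cdot\|_X$ (Lemma~\ref{equ}); the comparison principle \cite[Corollary~3]{PaMax1} sandwiches $u$ between $u^-$ and $u^+$, and the residual error $c(\alpha^+-\alpha^-)$, $c\le 2^n$, yields the stated bound. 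If you want to salvage your plan, you would have to prove the confinement theorem for periodic solutions with $m\notin F$ and the at-infinity stability of perturbations in $\|\cdot\|_X$ from scratch; the sup/inf periodization plus vertical shift is the device that makes these unnecessary.
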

\begin{proof}
Let $e_k$, $k=1,\ldots,n$, be a basis of the lattice $G$. We define for $r\in\N$ the parallelepiped
$$
P_r=\left\{ \ x=\sum_{k=1}^n x_ke_k: \ -r/2\le x_k<r/2, k=1,\ldots,n \ \right\}.
$$
It is clear that $P_r$ is a fundamental parallelepiped for a lattice $rG\subset G$.
We introduce the functions
$$
v_r^+(x)=\sup_{e\in G} v(x+re), \quad v_r^-(x)=\inf_{e\in G} v(x+re), \quad V_r(x)=\sup_{e\in G} |v(x+re)|.
$$
Since $G$ is countable, these functions are well-defined in $L^\infty(\R^n)$,
and $|v_r^\pm|\le V_r(x)\le C_0\doteq\|v\|_\infty$ for a.e. $x\in\R^n$. It is clear that $v_r^\pm(x)$ are $rG$-periodic and
\begin{equation}\label{p2v}
v_r^-(x)\le v(x)\le v_r^+(x).
\end{equation}
Let us show that under condition (\ref{van})
\begin{equation}\label{l3}
M_r=\frac{1}{|P_r|}\int_{P_r} V_r(x)dx\to 0 \ \mbox{ as } r\to+\infty.
\end{equation}
For that we fix $\varepsilon>0$ and define the set $A=\{ \ x\in\R^n: \ |v(x)|>\varepsilon \ \}$.
In view of (\ref{van}) the measure of this set is finite, $\meas A=q<+\infty$. We also define the sets
$$
A_r^e=\{ \ x\in P_r: \ x+re\in A \ \}\subset P_r, \quad r>0, \ e\in G, \quad A_r=\bigcup_{e\in G} A_r^e.
$$
By the translation invariance of Lebesgue measure and the fact that $\R^n$ is the disjoint union of the sets $re+P_r$, $e\in G$, we have
$$
\sum_{e\in G}\meas A_r^e=\sum_{e\in G}\meas (re+A_r^e)=\sum_{e\in G}\meas (A\cap (re+P_r))=\meas A=q.
$$
This implies that
\begin{equation}\label{Ar}
\meas A_r\le\sum_{e\in G}\meas A_r^e=q.
\end{equation}
If $x\notin A_r$ then $|v(x+re)|\le\varepsilon$ for all $e\in G$, which implies that $V_r(x)\le\varepsilon$. Taking (\ref{Ar}) into account, we find
$$
\int_{P_r} V_r(x)dx=\int_{A_r} V_r(x)dx+\int_{P_r\setminus A_r} V_r(x)dx\le C_0\meas A_r+\varepsilon\meas P_r\le C_0q+\varepsilon|P_r|.
$$
It follows from this estimate that
$$\limsup_{r\to+\infty} M_r\le\lim_{r\to+\infty}\left(\frac{C_0q}{|P_r|}+\varepsilon\right)=\varepsilon$$ and since $\varepsilon>0$ is arbitrary, we conclude that (\ref{l3}) holds.
Let
$$
\varepsilon_r^\pm=\frac{1}{|P_r|}\int_{P_r} v_r^\pm(x)dx
$$
be mean values of $rG$-periodic functions $v_r^\pm(x)$.
In view of (\ref{l3})
\begin{equation}\label{l4}
|\varepsilon_r^\pm|\le M_r\mathop{\to}_{r\to\infty} 0.
\end{equation}
By (\ref{l4}) we claim that $|\varepsilon_r^\pm|<\min(\alpha^+-m,m-\alpha^-)$ for sufficiently large $r\in\N$.
We introduce for such $r$ the $rG$-periodic functions
$$
u_0^+(x)=p(x)+v_r^+(x)+\alpha^+-m-\varepsilon_r^+, \quad u_0^-(x)=p(x)+v_r^-(x)-(m-\alpha^-+\varepsilon_r^-)
$$
with the mean values $\alpha^+,\alpha^-$, respectively. In view of (\ref{p2v}) and the conditions
$\alpha^+-m-\varepsilon_r^+>0$, $m-\alpha^-+\varepsilon_r^->0$, we have
\begin{equation}\label{p2}
u_0^-(x)\le u_0(x)\le u_0^+(x).
\end{equation}
Let $u^\pm$ be unique (by \cite[Corollary~3]{PaMax1}) e.s. of (\ref{1}), (\ref{2}) with initial functions
$u_0^\pm$, respectively. Taking into account that $(rG)'=\frac{1}{r}G'$, we see that condition (\ref{gn}), corresponding to the lattice $rG$ and the mean values $\alpha^-,\alpha^+$, is satisfied. By Theorem~\ref{thPD} (or \cite[Theorem~1.3]{PaNHM}) we find that
\begin{equation}\label{l5}
\lim_{t\to+\infty}\int_{P_r}|u^\pm(t,x)-\alpha^\pm|dx=0.
\end{equation}
By the periodicity, for each $y\in\R^n$
$$
\int_{y+P_r}|u^\pm(t,x)-\alpha^\pm|dx=\int_{P_r}|u^\pm(t,x)-\alpha^\pm|dx,
$$
which readily implies that for $V=\Int P_r$
$$
\|u^\pm(t,x)-\alpha^\pm\|_V=\int_{P_r}|u^\pm(t,x)-\alpha^\pm|dx.
$$
In view of Lemma~\ref{equ} we have the estimate
$$\|u^\pm(t,x)-\alpha^\pm\|_X\le C\int_{P_r}|u^\pm(t,x)-\alpha^\pm|dx, \ C=C_r=\const.$$
By (\ref{l5}) we claim that
\begin{equation}\label{l5a}
\lim_{t\to+\infty}\|u^\pm(t,\cdot)-\alpha^\pm\|_X=0.
\end{equation}
Let $u=u(t,x)$ be an e.s. of the original problem (\ref{1}), (\ref{2}) with initial data $u_0(x)$. Since the functions $u_0^\pm$ are periodic, then it follows from (\ref{p2}) and the comparison principle \cite[Corollary~3]{PaMax1} that $u^-\le u\le u^+$ a.e. in $\Pi$. This readily implies the relation
\begin{eqnarray}\label{l6}
\|u(t,\cdot)-m\|_X\le \|u^-(t,\cdot)-m\|_X+\|u^+(t,\cdot)-m\|_X\le \nonumber\\
\|u^-(t,x)-\alpha^-\|_X+\|u^+(t,x)-\alpha^+\|_X+c(\alpha^+-m+m-\alpha^-),
\end{eqnarray}
where $c<2^n$ is Lebesgue measure of the unit ball $|x|<1$ in $\R^n$.
In view of (\ref{l5a}) it follows from (\ref{l6}) in the limit as $t\to+\infty$
that
$$
\limsup_{t\to+\infty}\|u(t,\cdot)-m\|_X\le c(\alpha^+-\alpha^-)\le 2^n(\alpha^+-\alpha^-),
$$
as was to be proved.
\end{proof}

\subsection{Proof of Theorem~\ref{thM}}

We are going to establish that the statement of Proposition~\ref{pro1} remains valid in the case of arbitrary $G$. We will suppose that $\dim H<n$, otherwise $p\equiv\const$ and this case has been already considered in Introduction.

\begin{proposition}\label{pro2} Assume that $m\in(\alpha^-,\alpha^+)$, where $\alpha^\pm\in F$, and $u=u(t,x)$ is an e.s. of (\ref{1}), (\ref{2}). Then,
\begin{equation}\label{mest}
\limsup_{t\to+\infty}\|u(t,\cdot)-m\|_X\le 2^{n+3}(\alpha^+-\alpha^-).
\end{equation}
\end{proposition}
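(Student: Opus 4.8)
The plan is to mimic the proof of Proposition~\ref{pro1}, the two new difficulties being that $G$ need not be a lattice and that $p$ may be constant along the subspace $H$. I would first dispose of $H$ by the flattening device of the proof of Theorem~\ref{thPD}: pick an \emph{orthogonal} operator $Q$ on $\R^n$ with $Q(H)=\{(y',z):y'=0\}$, so that $\R^n=\R^d_{y'}\times\R^{n-d}_z$, $Q(H^\perp)=\R^d_{y'}$ and $1\le d\le n-1$. Setting $\tilde\varphi=Q\varphi$, $\tilde p(y')=p(Q^{-1}\cdot)$, $\tilde v(y',z)=v(Q^{-1}\cdot)\in L_0^\infty(\R^n)$, one has $u(t,x)=\tilde u(t,Qx)$ for the e.s. $\tilde u$ of the transformed problem, $\|\cdot\|_X$ is unchanged since $Q$ is orthogonal, $\tilde p$ depends only on $y'$ and is periodic with the lattice $\tilde L=Q(L_0)\subset\R^d$, and, exactly as in the proof of Theorem~\ref{thPD}, the identification $\xi=Q^*\zeta$ of $G'$ with $\tilde L'$ together with $\zeta\cdot\tilde\varphi=\xi\cdot\varphi$ turns $\alpha^\pm\in F$ into: for every $\zeta\in\tilde L'\setminus\{0\}$ the function $\tilde\varphi\cdot\zeta$ is not affine on any vicinity of $\alpha^\pm$. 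So from now on I assume $p=p(y')$. (One could also avoid the change of variables and work with the lattices $M\oplus rL_0$, $M\subset H$ a fixed lattice, but flattening is cleaner.)

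Next I would periodize $v$ \emph{only} in the genuinely periodic directions $y'$. For $r\in\N$ and a fundamental parallelepiped $P_r$ of $r\tilde L$ put $v_r^+(y',z)=\sup_{e\in\tilde L}v(y'+re,z)$, $v_r^-(y',z)=\inf_{e\in\tilde L}v(y'+re,z)$ and $V_r(y',z)=\sup_{e\in\tilde L}|v(y'+re,z)|$; these are $r\tilde L$-periodic in $y'$, bounded by $\|v\|_\infty$, and $v_r^-\le v\le v_r^+$. Running the estimates (\ref{Ar})--(\ref{l3}) fibrewise in $z$, with $q(z)=\meas_d\{y':|v(y',z)|>\varepsilon\}$ in place of $q$ (so $\int_{\R^{n-d}}q(z)\,dz=\meas_n\{|v|>\varepsilon\}<+\infty$ by Lemma~\ref{lem3}), gives $\mu_r(z):=|P_r|^{-1}\int_{P_r}V_r(y',z)\,dy'\le \|v\|_\infty q(z)/|P_r|+\varepsilon$, whence $\mu_r\in L_0^\infty(\R^{n-d})$ and $\|\mu_r\|_X\to0$ as $r\to+\infty$ (first $r\to\infty$, then $\varepsilon\to0$); since the $y'$-means $\varepsilon_r^\pm(z)=|P_r|^{-1}\int_{P_r}v_r^\pm(y',z)\,dy'$ satisfy $|\varepsilon_r^\pm|\le\mu_r$, also $\|\varepsilon_r^\pm\|_X\to0$. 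The barriers are $u_0^\pm(y',z)=p(y')+v_r^\pm(y',z)+(\alpha^\pm-m)$: they are $r\tilde L$-periodic in $y'$, satisfy $u_0^-\le u_0\le u_0^+$, and the mean of $u_0^\pm(\cdot,z)$ over $\R^d/r\tilde L$ equals $\alpha^\pm+\varepsilon_r^\pm(z)$, i.e. $\alpha^\pm$ up to an error negligible in $\|\cdot\|_X$ once $r$ is large.

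The crux is to show that the maximal (resp. minimal) e.s. $u^\pm$ with data $u_0^\pm$ decays, $\limsup_{t\to+\infty}\|u^\pm(t,\cdot)-\alpha^\pm\|_X\le\|\varepsilon_r^\pm\|_X$. The point is that $u_0^\pm$ oscillates only in the $y'$-directions (through $p$), while its $z$-dependence is a perturbation that disperses at infinity (after subtracting $\varepsilon_r^\pm(z)$), and genuine nonlinearity is available in precisely the directions $\tilde L'$ that matter: the rescaled data $u_0^\pm(\lambda y',\lambda z)$ tends weakly-$*$ in $L^\infty$ to the constant $\alpha^\pm$ as $\lambda\to+\infty$ ($p(\lambda y')\rightharpoonup m$ by periodicity, $v_r^\pm(\lambda y',\lambda z)\rightharpoonup0$ because its $y'$-average $\varepsilon_r^\pm(\lambda z)\to0$ in $L^1_{loc}$), while the non-affineness of $\tilde\varphi\cdot\zeta$ near $\alpha^\pm$ for $\zeta\in\tilde L'\setminus\{0\}$ yields strong $L^1_{loc}$-precompactness of the self-similar rescalings $u^\pm(\lambda t,\lambda x)$, so every partial limit is an e.s. with constant data $\alpha^\pm$ and hence $\equiv\alpha^\pm$; this is the mechanism of \cite{PaNHM,PaSIMA2}, now applied in the partially periodic situation. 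Granting it, the comparison principle — periodicity in $y'$ and monotonicity of the maximal and minimal e.s. in the data — gives $u^-\le u\le u^+$ a.e. on $\Pi$, and then, as in (\ref{l6}),
\begin{align*}
\limsup_{t\to+\infty}\|u(t,\cdot)-m\|_X
&\le \limsup_{t\to+\infty}\|u^+(t,\cdot)-\alpha^+\|_X+\limsup_{t\to+\infty}\|u^-(t,\cdot)-\alpha^-\|_X+c(\alpha^+-\alpha^-)\\
&\le \|\varepsilon_r^+\|_X+\|\varepsilon_r^-\|_X+c(\alpha^+-\alpha^-),
\end{align*}
with $c$ the volume of the unit ball; letting $r\to+\infty$ leaves $\limsup_{t\to+\infty}\|u(t,\cdot)-m\|_X\le c(\alpha^+-\alpha^-)$. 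The constant $2^{n+3}$ in (\ref{mest}) is a deliberately crude bound that also leaves room for a non-orthogonal flattening and the norm comparisons of Lemma~\ref{equ}.

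I expect the decay of the barriers to be the real obstacle. One cannot simply periodize $v$ in the $H$-directions as well: $\varphi\cdot\xi$ may be affine near $m$ for $\xi\in H$, which would destroy the genuine-nonlinearity hypothesis needed to invoke Theorem~\ref{thPD}; and one cannot dominate $v$ by a function constant along $H$ either, since the projection of a finite-measure set onto $H^\perp$ may have infinite measure. So the partially periodic decay must be obtained directly, by checking that the velocity-averaging / self-similar-scaling compactness argument of the periodic theory still works when genuine nonlinearity is guaranteed only along the periodic directions — that verification is the delicate step.
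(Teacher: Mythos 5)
Your flattening of $H$ and the fibrewise periodization of $v$ in the $y'$-directions are fine as far as they go, but the proof stops exactly where the real work is. After periodizing only in $y'$, your barrier data $u_0^\pm(y',z)=p(y')+v_r^\pm(y',z)+\alpha^\pm-m$ are periodic in $y'$ and genuinely depend on $z$, and the decay $\limsup_{t\to+\infty}\|u^\pm(t,\cdot)-\alpha^\pm\|_X\lesssim\|\varepsilon_r^\pm\|_X$ that your scheme requires is not available from anything at hand: Theorem~\ref{thPD} needs fully periodic data; Proposition~\ref{pro1} needs the group of periods to be a full lattice and the perturbation to lie in $L_0^\infty$, whereas $v_r^\pm-\varepsilon_r^\pm(z)$ is $y'$-periodic, so its superlevel sets typically have infinite measure and condition (\ref{van}) fails; and genuine nonlinearity only along the dual directions of the periodic variables does not by itself yield the strong precompactness of self-similar rescalings for solutions whose data depend nontrivially on $z$. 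What you invoke is in effect a new ``partially periodic'' decay theorem; you yourself flag it as ``the delicate step'' and do not prove it, so the argument is incomplete at its crux.

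Note also that the route you explicitly discard is the one that works. Your objection --- that the projection of a finite-measure set onto $H^\perp$ may have infinite measure --- is met by \emph{first} reducing to compactly supported $v$: for general $v\in L_0^\infty(\R^n)$ one truncates, $v_+=\max(v-\delta,0)$, $v_-=\min(v+\delta,0)$, which lie in $L^1(\R^n)$ by (\ref{van}) since they vanish outside the finite-measure set $\{|v|>\delta\}$, replaces $p$ by $p\pm\delta$ (whose means $m\pm\delta$ still lie in $(\alpha^-,\alpha^+)$), and then approximates in $L^1$ by compactly supported functions, transferring the estimate through the $L^1$-contraction property of the largest and smallest e.s. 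Once $\supp v$ is compact, one dominates $v$ by $\pm\|v\|_\infty\chi_A$ with $A=H\oplus K$, $K=\pr_{H^\perp}\supp v$ compact; these majorants are constant along $H$, so the extremal solutions depend only on $x'\in H^\perp$ and solve the $d$-dimensional problem there, where $L_0$ is a lattice and the compactly supported perturbation belongs to $L_0^\infty(H^\perp)$, so Proposition~\ref{pro1} applies directly. This two-step reduction (truncation and $L^1$-contraction, then $H$-invariant barriers) is what produces the constant $2^{n+3}$ and avoids any new decay statement for partially periodic data.
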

\begin{proof}
Suppose firstly that $v(x)\in L^\infty(\R^n)$ is a finite function, that is, its closed support $\supp v$ is compact.
Let $A=H+\supp v=H\oplus K$, where $K$ is the orthogonal projection of $\supp v$ on the space $H^\perp$.
We define the functions $v^\pm(x)=\pm\|v\|_\infty\chi_A(x)$, where $\chi_A(x)$ is the indicator function of the set $A$.
We define functions $u_0^\pm(x)=p(x)+v^\pm(x)$ and let $u^-(t,x)$ be the smallest e.s. of (\ref{1}), (\ref{2}) with initial data $u_0^-(x)$, $u^+(t,x)$ be the largest e.s. of (\ref{1}), (\ref{2}) with initial data $u_0^+(x)$, existence of such e.s. was established in \cite[Theorems~1,~1']{PaMax2}. Since $u_0^-(x)\le u_0(x)\le u_0^+(x)$ a.e. on $\R^n$, we derive that
$u^-\le u\le u^+$ by the property of monotone dependence of the smallest and the largest e.s. on initial data, cf.
\cite[Corollary~4]{PaMax2}. Observe that the initial functions $u_0^\pm(x)$ are constant in direction $H$. Therefore, for any $e\in H$, the functions $u^\pm(t,x+e)$ are the largest and the smallest e.s. of the same problems as $u^\pm(t,x)$. By the uniqueness we claim that $u^\pm(t,x+e)=u^\pm(t,x)$ a.e. in $\Pi$. Hence, $u^\pm(t,x)=u^\pm(t,x')$, $x'=\pr_{H^\perp} x$. As is easy to see, $u^\pm(t,x')$ are the largest and the smallest e.s. of $d$-dimensional problem
$$
u_t+\div_{x'}\tilde\varphi(u)=0, \quad u(0,x')=u_0^\pm(x')
$$
on the subspace $H^\perp$, where $\tilde\varphi(u)=\pr_{H^\perp}\varphi(u)$. In the same way as in the proof of Theorem~\ref{thPD} this problem can be written in the standard way (like (\ref{1r})~) by an appropriate change of the space variables. Since the initial functions $u_0^\pm(x')=p(x')+v^\pm(x')$, where $p(x')$ is periodic with the lattice of periods $L_0$, while $v^\pm(x')$ are bounded functions with compact support $K$ (so that $v^\pm(x')\in L_0^\infty(H^\perp)$), we may apply Proposition~\ref{pro1}. By this proposition,
\begin{align}\label{3}
\limsup_{t\to 0} \|u^\pm(t,x)-m\|_X\le\limsup_{t\to 0} 2^{n-d}\|u^\pm(t,x')-m\|_X\le\nonumber\\  2^{n-d}2^d(\alpha^+-\alpha^-)= 2^n(\alpha^+-\alpha^-),
\end{align}
where the first $X$-norm is taken in $L^\infty(\R^n)$ while the second $X$-norm is on $L^\infty(H^\perp)$.
Since the e.s. $u(t,x)$ is situated between $u^-$ and $u^+$, then $$\|u(t,\cdot)-m\|_X\le\|u^+(t,\cdot)-m\|_X+\|u^-(t,\cdot)-m\|_X$$ and in view of (\ref{3})
\begin{equation}\label{4}
\limsup_{t\to 0} \|u(t,\cdot)-m\|_X\le 2^{n+1}(\alpha^+-\alpha^-).
\end{equation}
Now we suppose that $v\in L^1(\R^n)\cap L^\infty(\R^n)$. For fixed $\varepsilon>0$ we can find a function $\tilde v\in L^\infty(\R^n)$ with compact support such that $\|v-\tilde v\|_1\le\varepsilon$. We denote by $u^+=u^+(t,x)$, $u^-=u^-(t,x)$ the largest and the smallest e.s. of (\ref{1}), (\ref{2}) with initial function $u_0=p(x)+v(x)$. Similarly,
by $\tilde u^+=\tilde u^+(t,x)$, $\tilde u^-=\tilde u^-(t,x)$ we denote the largest and the smallest e.s. of (\ref{1}), (\ref{2}) with initial function $\tilde u_0=p(x)+\tilde v(x)$. It is known, cf. \cite[Theorems~1,~1']{PaMax2}, that the largest and the smallest e.s. exhibit the $L^1$-contraction property. In particular, for all $t>0$
\begin{equation}\label{5}
\int_{\R^n}|u^\pm(t,x)-\tilde u^\pm(t,x)|dx\le\int_{\R^n}|u_0(x)-\tilde u_0(x)|dx=\|v-\tilde v\|_1<\varepsilon.
\end{equation}
Since the function $\tilde v$ has finite support, relation (\ref{4}) holds for the e.s. $\tilde u^\pm(t,x)$, i.e.,
\begin{equation}\label{6}
\limsup_{t\to 0} \|\tilde u^\pm(t,\cdot)-m\|_X\le 2^{n+1}(\alpha^+-\alpha^-).
\end{equation}
In view of (\ref{5})
$$\|u^\pm(t,\cdot)-\tilde u^\pm(t,\cdot)\|_X\le \|u^\pm(t,\cdot)-\tilde u^\pm(t,\cdot)\|_1<\varepsilon$$
and (\ref{6}) implies the estimates
$$
\limsup_{t\to 0} \|u^\pm(t,\cdot)-m\|_X\le 2^{n+1}(\alpha^+-\alpha^-)+\varepsilon.
$$
and since $\varepsilon>0$ is arbitrary, we find that
\begin{equation}\label{7}
\limsup_{t\to 0} \|u^\pm(t,\cdot)-m\|_X\le 2^{n+1}(\alpha^+-\alpha^-).
\end{equation}
Since $u^-\le u\le u^+$, then $\|u(t,\cdot)-m\|_X\le \|u^+(t,\cdot)-m\|_X+\|u^-(t,\cdot)-m\|_X$ and it follows from (\ref{7}) that
\begin{equation}\label{8}
\limsup_{t\to 0} \|u(t,\cdot)-m\|_X\le 2^{n+2}(\alpha^+-\alpha^-).
\end{equation}
In the general case $v\in L_0^\infty(\R^n)$ we choose such $\delta>0$ that $\alpha^-<m-\delta<m+\delta<\alpha^+$ and set
$v_+(x)=\max(v(x)-\delta,0)$, $v_-(x)=\min(v(x)+\delta,0)$. Observe that these functions vanish outside of the set $|v(x)|>\delta$ of finite measure. Therefore, $v_\pm\in L^1(\R^n)$. Obviously,
\begin{equation}\label{9}
u_0(x)\le u_{0+}(x)=p(x)+\delta+v_+(x), \quad u_0(x)\ge u_{0-}(x)=p(x)-\delta+v_-(x).
\end{equation}
Notice that $p(x)\pm\delta$ are periodic functions with the same group of periods $G$ as $p(x)$ and with the mean values
$m\pm\delta\in (\alpha_-,\alpha_+)$. Let $u_+(t,x)$ be the largest e.s. of problem (\ref{1}), (\ref{2}) with initial function
$u_{0+}(x)$, and $u_-(t,x)$ be the smallest  e.s. of this problem with initial data
$u_{0-}(x)$. In view of (\ref{9}), we have $u_-(t,x)\le u(t,x)\le u_+(t,x)$. As we have already established, the e.s.
$u_\pm(t,x)$ satisfy relation (\ref{8}):
$$
\limsup_{t\to 0} \|u_\pm(t,\cdot)-(m\pm\delta)\|_X\le 2^{n+2}(\alpha^+-\alpha^-).
$$
This implies that
\begin{equation}\label{10}
\limsup_{t\to 0} \|u_\pm(t,\cdot)-m\|_X\le 2^{n+2}(\alpha^+-\alpha^-)+2^n\delta,
\end{equation}
where we use the fact that measure of a unit ball in $\R^n$ is not larger than $2^n$.
Since the e.s. $u$ is situated between $u_-$ and $u_+$, we derive from (\ref{10}) that
$$
\limsup_{t\to 0} \|u_\pm(t,\cdot)-m\|_X\le 2^{n+3}(\alpha^+-\alpha^-)+2^{n+1}\delta,
$$
and to complete the proof it only remains to notice that a sufficiently small $\delta>0$ is arbitrary.
\end{proof}
Under the assumption of Theorem~\ref{thM} the value $\alpha^+-\alpha^-$ in (\ref{mest}) may be arbitrarily small. Therefore, (\ref{dec}) follows from (\ref{mest}). This completes the proof of our main Theorem~\ref{thM}.
Remark that in the case when the perturbation $v\ge 0$ ($v\le 0$) we can weaken the genuine nonlinearity assumption in Theorem~\ref{thM} by the requirement $\forall b>m$ $(m,b)\cap F\not=\emptyset$ (respectively, $\forall a<m$ $(a,m)\cap F\not=\emptyset$). However, it is not possible, to weaken this assumption by the condition $m\in F$, as in the periodic case $v\equiv 0$. Let us confirm this by the following example.

\begin{example}\label{ex1}
In the case of a single space variable we consider the equation
\begin{equation}\label{e1}
u_t+(\max(0,-u))_x=0.
\end{equation}
Let $p(x)$ be $2$-periodic function such that $p(x)=\left\{\begin{array}{lcr} 1 & , & 0\le x<1, \\ -1 & , & 1\le x<2\end{array}\right..$ The mean value of this function $m=\int_0^2 p(x)dx=0$ while $0\in F$
(the flux function $\varphi(u)=\max(0,-u)$ is not affine in any vicinity of $0$). Moreover, $F=\{0\}$ and the genuine nonlinearity assumption of Theorem~\ref{thM} is violated. The e.s. $\tilde u(t,x)$ of Cauchy problem for equation (\ref{e1}) with initial data $p(x)$ can be constructed explicitly. It is $2$-periodic with respect to $x$, while for $x\in [0,2)$
$$
\tilde u(t,x)=\left\{\begin{array}{lcr} 1 & , & 0\le x<1-t/2, \\ -1 & , & 1-t/2\le x<2-t, \\ 0 & , & 2-t\le x<2,\end{array}\right.
$$
see Fig.~\ref{fig1}.

In particular, $\tilde u(t,x)\equiv 0$ for all $t>2$, which is consistent with the statement of Theorem~\ref{thPD}.
Now we consider the small perturbation $v(x)=\varepsilon\chi_{[0,1)}(x)$, where $\varepsilon>0$ and $\chi_{[0,1)}(x)$ is the indicator function of the interval $[0,1)$. Let $u(t,x)$ be the e.s. of (\ref{e1}) with initial condition $u(0,x)=p(x)+v(x)$. As is easy to verify, this e.s. $u(t,x)=\tilde u(t,x)$ if $x\notin [0,2)$ while for $x\in [0,2)$
$$
u(t,x)=\left\{\begin{array}{lcr} 1+\varepsilon & , & 0\le x<\max(1-t/(2+\varepsilon),\varepsilon/(1+\varepsilon)), \\ -1 & , & 1-t/(2+\varepsilon)\le x<2-t, \\ 0 & , & \max(2-t,\varepsilon/(1+\varepsilon))\le x<2, \end{array}\right.
$$
see Fig.~\ref{fig2}.

\begin{figure}[h!]
\center{
\includegraphics[width=5in]{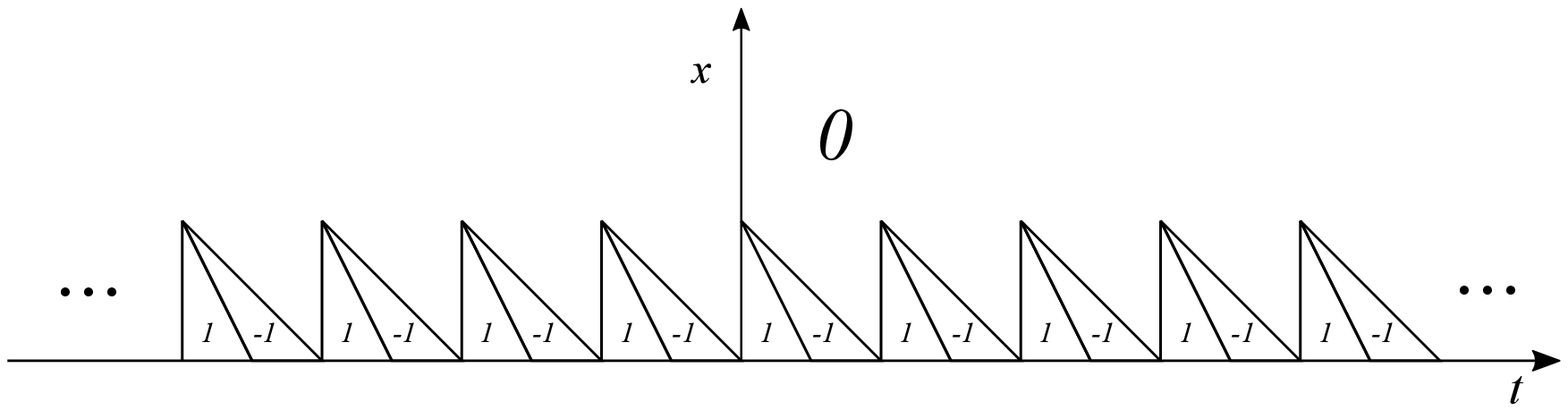}
}
\caption{The solution $u(t,x)$ with the periodic initial data.
\label{fig1}}
\end{figure}

\begin{figure}[h!]
\center{
\includegraphics[width=5in]{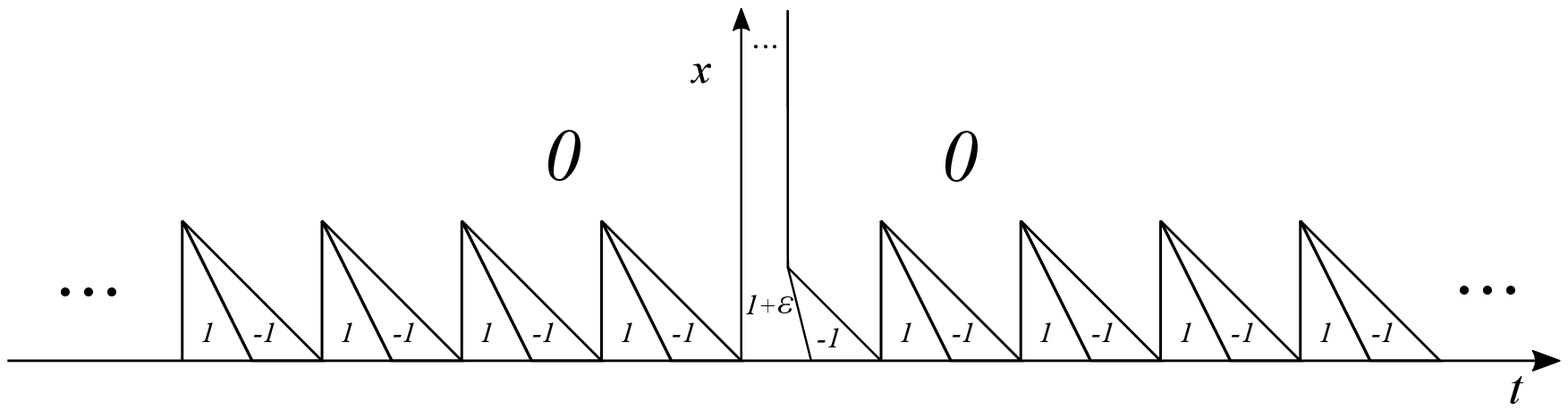}
}
\caption{The solution $u(t,x)$ with the perturbed initial data.
\label{fig2}}
\end{figure}

In particular, $u(t,x)\equiv (1+\varepsilon)\chi_{[0,\varepsilon/(1+\varepsilon))}(x)$ for $t\ge 2$ and the decay property (\ref{dec}) fails.
\end{example}

\section*{Acknowledgments}
This work was supported by the ``RUDN University Program 5-100'',  the Ministry of Science and Higher Education of the Russian  Federation (project no. 1.445.2016/1.4) and by the Russian Foundation for Basic Research (grant 18-01-00258-a).


\begin{thebibliography}{944}
\bibitem{ABK}
B.\,P.~Andreianov, Ph.~B\'enilan, S.\,N.~Kruzhkov, $L^1$-theory of scalar conservation law with continuous flux function, {\it J. of Functional Analysis} {\bf 171} (2000) 15--33.
\bibitem{BenKr}
Ph.~B\'enilan, S.\,N.~Kruzhkov, Conservation laws with continuous flux functions, {\it  Nonlinear Differential Equations Appl.} {\bf 3} (1996) 395--419.
\bibitem{ChF}
G.-Q.~Chen, H.~Frid, Decay of entropy solutions of nonlinear conservation laws, {\it Arch. Ration. Mech. Anal.} {\bf 146}:2 (1999) 95--127.
\bibitem{Daferm}
C.\,M.~Dafermos, Long time behavior of periodic solutions to scalar conservation laws in several space dimensions, {\it SIAM J. Math. Anal.} {\bf 45} (2013), 2064--2070.
\bibitem{Kr}
S.\,N.~Kruzhkov, First order quasilinear equations in several independent variables, {\it Math. USSR Sb.} {\bf 10} (1970) 217--243.
\bibitem{KrPa1}
S.\,N.~Kruzhkov, E.\,Yu.~Panov, First-order conservative quasilinear laws with an infinite domain of dependence on the initial data, {\it Soviet Math. Dokl.} {\bf 42} (1991) 316--321.
\bibitem{KrPa2}
S.\,N.~Kruzhkov, E.\,Yu.~Panov, Osgood's type conditions for uniqueness of entropy solutions to Cauchy problem for quasilinear conservation laws of the first order, {\it Ann. Univ. Ferrara Sez. VII (N.S.)} {\bf 40} (1994) 31--54.
\bibitem{PaMax1}
E.\,Yu.~Panov, A remark on the  theory of generalized entropy sub- and supersolutions of the Cauchy problem for a first-order quasilinear equation, {\it Differ. Equ.} {\bf 37} (2001) 272--280.
\bibitem{PaMax2}
E.\,Yu.~Panov, Maximum and minimum generalized entropy solutions to the Cauchy problem for a first-order quasilinear equation, {\it Sb. Math.} {\bf 193}:5 (2002) 727--743.
\bibitem{PaIzv}
E.\,Yu.~Panov, On generalized entropy solutions of the Cauchy problem for a first order quasilinear equation in the class of locally summable functions, {\it Izv. Math.} {\bf 66}:6 (2002) 1171--1218.
\bibitem{PaJHDE}
E.\,Yu.~Panov, Existence of strong traces for generalized solutions of multidimensional scalar conservation laws, {\it J. Hyperbolic Differ. Equ.} {\bf 2} (2005) 885--908.
\bibitem{PaAIHP}
E.\,Yu.~Panov, On decay of periodic entropy solutions to a scalar conservation law, {\it Ann I. H.~Poincare-AN} {\bf 30} (2013) 997--1007.
\bibitem{PaNHM}
E.\,Yu.~Panov, On a condition of strong precompactness and the decay of periodic entropy solutions to scalar conservation laws, {\it Netw. Heterog. Media} {\bf 11} (2016) 349--367.
\bibitem{PaSIMA2}
E.\,Yu.~Panov, On Decay of Entropy Solutions to Multidimensional Conservation Laws, {\it SIAM J. Math. Anal.} {\bf 52}:2 (2020) 1310--1317.
\end{thebibliography}
\end{document}